\documentclass[11pt]{amsart}
\usepackage{geometry}
\usepackage{amsmath, amssymb, amsthm, mathrsfs}
\usepackage{mathtools}
\usepackage[protrusion=true,expansion=false]{microtype}
\usepackage{enumitem}
\usepackage[dvipsnames]{xcolor}
\usepackage{hyperref}
\hypersetup{colorlinks=true, citecolor=red, linkcolor=blue, filecolor=magenta, urlcolor=red}

\usepackage[capitalize,nameinlink]{cleveref}

\newtheorem{thm}{Theorem}[section]

\theoremstyle{definition}

\newtheorem{rem}[thm]{Remark}

\DeclareMathOperator{\Spec}{Spec}
\DeclareMathOperator{\Sing}{Sing}

\title{A question on klt type varieties of Han and Jiang}
\author{Jihao Liu}
\address{Department of Mathematics, Peking University, No. 5 Yiheyuan Road, Haidian District, Beijing 100871, China}
\address{Beijing International Center for Mathematical Research, Peking University, No. 5 Yiheyuan Road, Haidian District, Beijing 100871, China}
\email{liujihao@math.pku.edu.cn}
\subjclass[2020]{14E30, 13A35, 14B05}
\keywords{klt type, flat family, determinantal ring, F-purity, openness}
\date{\today}

\begin{document}

\begin{abstract}
We prove that being of klt type is not an open condition in flat families of varieties. This answers a question of Han and Jiang. The construction in this paper substantially uses generative AI: the general idea for the counterexample was suggested by ChatGPT Pro 5.5, and the explicit example was found and proved by the Rethlas system.
\end{abstract}

\maketitle

\section{Introduction}\label{sec:introduction}

Han and Jiang asked \cite[Problem~4.6]{HJ26} whether being of klt type is an open condition in a flat family of varieties. We answer their question by using Singh's determinantal family \cite{Sin99}.

\begin{thm}\label{thm:main}
Being of klt type is not an open condition in flat families of varieties. As a consequence, \cite[Problem~4.6]{HJ26} has a negative answer.
\end{thm}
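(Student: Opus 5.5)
We will use Singh's determinantal family \cite{Sin99}. Recall that Singh considers the polynomial ring $A=k[t,x_{ij}]$ in one extra variable $t$ and a matrix $M$ whose entries are the $x_{ij}$, with a single entry perturbed by $t$ (or a power of $t$). He sets $R=A/I_2(M)$ or a similar ideal of minors, and takes the family $\mathcal X=\Spec R\to \mathbb A^1_t$. The properties we need are the following. For $t=0$ the fiber is a determinantal variety, which is strongly $F$-regular and, in characteristic zero, of klt type. For general $t\neq 0$ the fiber is not $F$-pure, or is not even $\mathbb Q$-Gorenstein-deformable to klt type. In Singh's original work, $F$-regularity fails to deform, i.e.\ the total space is not $F$-pure although the special fiber is $F$-regular.

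The plan is as follows.

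\textbf{Step 1: choose the characteristic-zero version.} Work over $\mathbb C$. Let $\mathcal X\to T=\Spec\mathbb C[t]$ be the flat family, where flatness follows from the Cohen--Macaulayness of determinantal rings and the fact that $t$ is a nonzerodivisor.

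\textbf{Step 2: the special fiber is of klt type.} The fiber $\mathcal X_0$ is a generic determinantal variety. Such varieties are of klt type: they are quotient-type, or more precisely $F$-regular type, and by Hara--Watanabe together with the existence of a boundary $\Delta$ with $K+\Delta$ $\mathbb Q$-Cartier, strong $F$-regularity of the reductions gives klt type. Alternatively, one can use the explicit toric/GIT description as a good quotient of affine space by $GL$, which gives klt type by Braun--Greb--Langlois--Moraga.

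\textbf{Step 3: nearby fibers are not of klt type.} We want a general fiber $\mathcal X_t$, or the total space near the origin, that fails klt type. Openness would force every fiber near $0$ to be klt type, and klt type implies strongly $F$-regular type, hence $F$-pure type, for reduction mod $p\gg0$. So it suffices to show that the nearby fibers are not of $F$-pure type. Following Singh, we will exhibit, for every $p\gg0$, an explicit element in the Frobenius closure or a failure of Fedder's criterion, $I^{[p]}:I\subseteq \mathfrak m^{[p]}$, for the reduction of $\mathcal X_t$. If Singh's statement instead concerns the total space, we reparametrize: we consider the family over $\mathbb A^1$ of hyperplane sections $\mathcal X\cap\{t=s\}$ twisted by another parameter, or use the total space as a fiber of the family $\mathcal X\times\mathbb A^1\to\mathbb A^1$ via the map $(x,t)\mapsto t-s$, so that the non-klt total space appears as general fibers.

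The main obstacle is Step 3. One must check that the failure of $F$-purity holds for infinitely many (indeed all large) $p$, so that it contradicts klt type via Hara--Watanabe/Takagi. One must also arrange the family so that it is the general fibers, not the special one, that are bad. I would first try to compute the Fedder colon directly on Singh's explicit generators, tracking a monomial of degree $p-1$.
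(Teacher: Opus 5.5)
Your proposal has a genuine gap in Step 3: it misplaces where Singh's failure of $F$-purity lives. Singh's ring is $R=\mathbb C[A,B,C,D,t]/I_2(M_t)$ with $M_t=\begin{pmatrix}A^2+t^5&B&D\\ C&A^2&B^2-D\end{pmatrix}$. The entries are not generic: for a generic matrix with one entry shifted by $t$, the total space would just be $\mathbb A^1$ times a generic determinantal variety, which is of klt type.

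What Singh proves is that the \emph{total space} $R_p$ is not $F$-pure (at the vertex) for $p\nmid 10$. The fibers over $t=a\neq 0$ are all isomorphic, since $R$ is positively graded with $\deg t=2$, and they are harmless. At every point one of $A^2+a^5$, $A^2$ is a unit, so the fiber is a local complete intersection surface. Its only singular points are $A=B=C=D=0$ and $A=B=D=0$, $C=-a^5$, which are of type $A_1$ and $A_3$. So every fiber of $\mathcal X\to\mathbb A^1_t$ is of klt type. Singh's family itself is not a counterexample, and the Fedder computation you plan on $\mathcal X_t$ cannot succeed.

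Your fallback does not repair this. On $\mathcal X\times\mathbb A^1_s$ the function $t(x)-s$ has every fiber isomorphic to $\mathcal X$ (solve for $s$), so its special fiber is not of klt type either. The missing idea is a \emph{multiplicative} base change: put $\mathcal R=R\otimes_{\mathbb C[t]}\mathbb C[s,E]$ with $t\mapsto sE$, viewed over $\mathbb A^1_s$.
\begin{itemize}
\item At $s=0$: $\mathcal R/s\mathcal R\simeq (R/tR)[E]$, so $X_0\simeq\Spec(R/tR)\times\mathbb A^1_E$.
\item At $s=a\neq 0$: the fiber $\{t=aE\}$ is isomorphic to $\Spec R$ after eliminating $E$.
\item Flatness over $\mathbb C[s]$ follows by base change from flatness of $\mathbb C[t]\to R$, since $R$ is a domain and hence torsion-free over a PID.
\item You must still check that $\mathcal R$ and its fibers are normal domains.
\end{itemize}
With this family your Step 3 reasoning goes through, applied to $\Spec R$ rather than to $\mathcal X_t$: klt type gives $F$-pure reductions for a dense set of primes, contradicting Singh's non-$F$-purity of $R_p$ for all $p\nmid 10$.

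Step 2 also argues about the wrong object. The special fiber $R/tR$ is not a generic determinantal ring. Singh's formulas identify it with the fifth Veronese subring $H^{(5)}$ of $H=\mathbb C[A,X,Y]/(A^2-XY(X^2-Y))$, graded by $\deg A=5$, $\deg X=2$, $\deg Y=4$. The correct argument has two parts:
\begin{itemize}
\item $H$ is a normal Gorenstein hypersurface of strongly $F$-regular type, hence klt by Hara--Watanabe/Takagi; no boundary is needed since $K_H$ is Cartier.
\item Klt type descends along the split finite inclusion $H^{(5)}\subset H$, e.g.\ by Zhu's theorem on pure images, or because $\Spec H^{(5)}$ is the quotient of $\Spec H$ by $\mu_5$.
\end{itemize}
Your appeal to strongly $F$-regular type together with some boundary $\Delta$ making $K+\Delta$ $\mathbb Q$-Cartier is not automatic for non-$\mathbb Q$-Gorenstein rings. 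Passing through the Gorenstein ring $H$ avoids that issue.
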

\begin{proof}
Let \(R=\mathbb C[A,B,C,D,T]/I_T\), where \(I_T\) is generated by the
\(2\times 2\) minors of
\[
M_T=
\begin{pmatrix}
A^2+T^5 & B & D\\
C & A^2 & B^2-D
\end{pmatrix},
\]
and set
\[
\mathcal R
=
R\otimes_{\mathbb C[T]}\mathbb C[s,E],
\qquad T\mapsto sE.
\]
Equivalently,
\[
\mathcal R\simeq
\mathbb C[s,A,B,C,D,E]/I_{sE}.
\]
Let \(\mathcal{X}:=\Spec\mathcal{R}\), and let
\(\pi\colon \mathcal X\to \mathbb A^1_s\) be the morphism induced by
\(\mathbb C[s]\to \mathcal R\).

We first prove that the special fiber is of klt type. Put \(S=R/TR\). For a graded ring \(G\), write
\[
G^{(5)}=\bigoplus_{q\geq 0}G_{5q}
\]
for the fifth Veronese subring. Let
\[
H_{\mathbb K}
=
\mathbb K[A,X,Y]/(A^2-XY(X^2-Y)),
\]
for any field $\mathbb K$ with grading
\[
\deg A=5,\qquad \deg X=2,\qquad \deg Y=4.
\]
The explicit formulas in the proof of \cite[Proposition~4.3]{Sin99} (stated there over an algebraically closed field $K$ of characteristic $p>2$, but the argument is verbatim over $\mathbb C$) identify the corresponding reductions of \(S\) with the fifth Veronese subrings of the reductions of $H_{\mathbb C}$, that is,
\[
S\simeq H_{\mathbb C}^{(5)}.
\]
Set \(f=A^2-XY(X^2-Y)\). Then
\[
\partial_A f=2A,\qquad
\partial_X f=Y(Y-3X^2),\qquad
\partial_Y f=X(-X^2+2Y).
\]
These three derivatives vanish together with \(f\) only at the origin; if \(Y=0\), then \(X^3=0\), and if \(Y=3X^2\), then \(5X^3=0\). Thus \(H_{\mathbb C}\) is a hypersurface whose singular locus has codimension \(2\), so it is normal by Serre's criterion. Moreover \(f\) is irreducible, since \(XY(X^2-Y)\) is not a square in
\(\mathbb C[X,Y]\). Hence \(H_{\mathbb C}\) is a domain.

By \cite[Proposition~4.3, Remark 4.4]{Sin99}, the reductions
\[
H_p:=H_{\mathbb Z}\otimes_{\mathbb Z}\mathbb F_p
\]
are \(F\)-regular for every prime \(p>2\). Hence \(H_{\mathbb C}\) is of strongly \(F\)-regular type. By \cite[Corollary~3.4]{Tak04}, \(\Spec H_{\mathbb C}\) is klt. The inclusion
\[
H_{\mathbb C}^{(5)}\subset H_{\mathbb C}
\]
is finite and split by projection onto the homogeneous summands whose degrees are divisible by \(5\). Thus
\[
\Spec H_{\mathbb C}\to \Spec S
\]
is pure. By \cite[Theorem~1.1]{Zhu24}, a pure image of an affine klt type variety is of klt type. Therefore \(\Spec S\) is of klt type, so
\[
X_0=\Spec(\mathcal R/s\mathcal R)
\simeq
\Spec S\times \mathbb A^1_E
\]
is of klt type.

We next verify flatness and normality. The ring \(R\) is positively graded by
\[
\deg A=5,\qquad \deg B=\deg C=10,\qquad \deg D=20,\qquad \deg T=2.
\]
Singh's determinantal calculation \cite[Remark~4.1]{Sin99} shows that \(R\) is Cohen--Macaulay of dimension \(3\) and that \(T\) is a nonzerodivisor (stated there over an algebraically closed field $K$ of characteristic $p>2$, but the argument is verbatim over $\mathbb C$). We prove that \(R\) is a domain. If \(0\neq x\in R\), then since \(T\) has positive degree and every element of \(R\) has only
finitely many homogeneous components, for every nonzero \(x\in R\)
there exists a maximal \(q\geq 0\) such that \(x\in T^qR\). If \(xy=0\) with \(x,y\neq 0\), choose
\[
x=T^rx',\qquad y=T^sy',
\qquad x'\notin TR,\qquad y'\notin TR.
\]
Since \(T\) is a nonzerodivisor, \(x'y'=0\). Reducing modulo \(T\) gives
\[
\overline{x'}\,\overline{y'}=0
\]
in \(S=R/TR\), which is a domain. This contradicts the choice of \(x'\) and \(y'\). Thus \(R\) is a domain.

The same Jacobian computation as in \cite[Proposition~6.2]{Sin99} (stated there over an algebraically closed field $K$ of characteristic $p>2$, but the argument is verbatim over $\mathbb C$), applied over \(\mathbb C\), gives
\[
\Sing R\subset V(A,B,D,C(C+T^5)).
\]
The right-hand side is the union of
\[
V(A,B,C,D)\qquad\text{and}\qquad V(A,B,D,C+T^5),
\]
and each component has dimension \(1\). Since \(\dim R=3\), the singular locus has codimension at least \(2\). Thus \(R\) satisfies \(R_1\). Since \(R\) is Cohen--Macaulay, it satisfies \(S_2\). Hence \(R\) is normal.

Moreover
\[
\mathcal R\simeq R[s,E]/(T-sE).
\]
The ring \(R[s,E]\) is a Cohen--Macaulay domain, and \(T-sE\) is a nonzerodivisor, so \(\mathcal R\) is Cohen--Macaulay. Multiplication by \(s\) on \(\mathcal R\) is injective: if
\[
sx=(T-sE)g
\]
in \(R[s,E]\), then reducing modulo \(s\) gives \(T\overline g=0\) in \(R[E]\), hence \(\overline g=0\); thus \(g=sg'\), and then \(x\in (T-sE)\). Since
\[
\mathcal R_s\simeq R[s,s^{-1}]
\]
is a domain and \(s\) is a nonzerodivisor on \(\mathcal R\), the ring \(\mathcal R\) is a domain.

Let \(P\subset \mathcal R\) be a prime of height $1$. If \(s\notin P\), then \(\mathcal R_P\) is a height-one localization of the normal ring \(R[s,s^{-1}]\). If \(E\notin P\), then \(\mathcal R_P\) is a height-one localization of
\[
\mathcal R_E\simeq R[E,E^{-1}].
\]
In both cases \(\mathcal R_P\) is regular. Finally, \(s,E\) cannot both lie in \(P\), because
\[
\mathcal R/s\mathcal R\simeq S[E]
\]
and \(E\) is a nonzerodivisor on \(S[E]\), so \((s,E)\) is an \(\mathcal R\)-regular sequence. Hence \(\mathcal R\) satisfies \(R_1\). Since \(\mathcal R\) is Cohen--Macaulay, Serre's criterion implies that \(\mathcal R\) is normal.

The morphism \(\pi\) is flat. The map \(\mathbb C[T]\to R\) is injective because it admits the retraction
\[
R\to \mathbb C[T],
\qquad
A,B,C,D\mapsto 0.
\]
Since \(R\) is a domain, it is torsion-free over the PID \(\mathbb C[T]\), hence flat. By base change,
\[
\mathcal R=R\otimes_{\mathbb C[T]}\mathbb C[s,E]
\]
is flat over \(\mathbb C[s,E]\), and therefore over \(\mathbb C[s]\).

The closed fibers are identified as follows. For \(s=0\),
\[
\mathcal R/s\mathcal R\simeq R[E]/(T)\simeq S[E],
\]
so \(X_0\simeq \Spec S\times \mathbb A^1_E\), already shown to be of klt type. If \(a\in\mathbb C^*\), then the change of variables \(T=aE\) gives
\[
\mathcal R/(s-a)
\simeq
\mathbb C[A,B,C,D,E]/I_{aE}
\simeq
R.
\]
Thus \(X_a\simeq \Spec R\) for every \(a\neq 0\). In particular all closed fibers are normal.

It remains to see that the nonzero fibers are not of klt type. Let
\[
R_{\mathbb Z}=\mathbb Z[A,B,C,D,T]/I_{T,\mathbb Z},
\qquad
R_p=R_{\mathbb Z}\otimes_{\mathbb Z}\mathbb F_p,
\]
where \(I_{T,\mathbb Z}\) is generated by the same three $2\times 2$ minors over \(\mathbb Z\). By \cite[Proposition~4.5]{Sin99}, for \(m=5\) and \(n=2\), the ring \(R_p\) is not \(F\)-pure for every prime \(p\nmid 10\). 

Suppose that \(\Spec R\) were of klt type. Then there exists a
\(\mathbb Q\)-divisor \(\Delta\geq 0\) such that \((\Spec R,\Delta)\) is klt.
By \cite[Corollary~3.4]{Tak04}, after spreading out, for closed points
\(\mu\) in a dense open subset of \(\Spec A_0\), the corresponding
reduction is strongly \(F\)-regular as a pair; in particular, the underlying
ring is \(F\)-pure. This is not possible because the image in \(\Spec \mathbb Z\) of a dense open subset of \(\Spec A_0\) contains a dense set of primes, but $R_p$ is not $F$-pure for any prime $p\nmid 10$.

Consequently, \(X_0\) is of klt type, while \(X_s\) is not of klt type for every \(s\in\mathbb C^*\). Since every Zariski-open neighborhood of \(0\) in \(\mathbb A^1_{\mathbb C}\) contains a nonzero closed point, the klt-type fiber locus is not open. This proves the theorem.
\end{proof}

\begin{rem}
Junpeng Jiao informed that our result is more or less well-known by experts (although the author cannot find any direct reference, particularly after \cite{HJ26}), and the more natural question to ask is whether klt is a constructible condition or not. As a counterpart, it is interesting to ask whether being Fano type is a constructible condition or not. 
\end{rem}

\begin{rem}
The counterexample in this paper was obtained with the assistance of generative AI. The general idea for the construction was suggested by ChatGPT Pro 5.5; the explicit example was then found and proved by the Rethlas system. See \cite{Ju+26} for a detailed introduction to the Rethlas system.

Due to the limitation of generative AI, it is possible that we have missed some related references in the literature, and we welcome any comments from experts.
\end{rem}

\subsection*{Acknowledgements}
The author was partially supported by the National Key R\&D Program of China \#\allowbreak 2024YFA1014400. The author would like to thank the Rethlas team, namely Haocheng Ju, Jiedong Jiang, Shurui Liu, Guoxiong Gao, Yuefeng Wang, Zeming Sun, Bin Wu, Liang Xiao, and Bin Dong, for their contributions to the development of Rethlas and its customized version used for the problem studied in this paper. The author would like to thank Kaiyuan Gu, Ruicheng Hu, and Sheng Qin for assistance with the verification of an earlier blueprint of this paper. The author would like to thank Ruochuan Liu and Gang Tian for constant support and encouragement. The author would like to thank Junpeng Jiao for useful discussions.

\end{document}